\newtheorem{theorem}{Theorem}[section]
\newtheorem{lemma}[theorem]{Lemma}
\newtheorem{proposition}[theorem]{Proposition}
\newtheorem{corollary}[theorem]{Corollary}
\newtheorem{definition}[theorem]{Definition}
\newtheorem{remark}[theorem]{Remark}
\newcommand{\Qp}{\mathbb{Q}_p}
\newcommand{\Zp}{\mathbb{Z}_p}
\begin{document}

\title{Plus and minus logarithms and Amice transform}

\begin{abstract}
We give a new description of Pollack's plus and minus $p$-adic logarithms $\log_p^\pm$ in terms of distributions. In particular, if $\mu_\pm$ denote the pre-images of $\log_p^\pm$ under the Amice transform, we give explicit formulae for the values $\mu_\pm(a+p^n\Zp)$ for all $a\in \Zp$ and all integers $n\ge1$. Our formulae imply that the distribution $\mu_-$ agrees with a distribution studied by Koblitz in 1977.  Furthermore, we show that a similar description exists for  Loeffler's two-variable analogues of these plus and minus logarithms.
\end{abstract}

\author{C\'edric Dion}
\address{C\'edric Dion\newline
D\'epartement de Math\'ematiques et de Statistique\\
Universit\'e Laval, Pavillion Alexandre-Vachon\\
1045 Avenue de la M\'edecine\\
Qu\'ebec, QC\\
Canada G1V 0A6}
\email{cedric.dion.1@ulaval.ca}

\author{Antonio Lei}
\address{Antonio Lei\newline
D\'epartement de Math\'ematiques et de Statistique\\
Universit\'e Laval, Pavillion Alexandre-Vachon\\
1045 Avenue de la M\'edecine\\
Qu\'ebec, QC\\
Canada G1V 0A6}
\email{antonio.lei@mat.ulaval.ca}

\subjclass[2010]{11S80 (primary); 26E30, 11R23 (secondary) }
\keywords{$p$-adic logarithms, $p$-adic distributions, Amice transform}

\maketitle

\section{Introduction}
Let $p$ a fixed prime and $E$ an elliptic curve with good supersingular reduction at $p$. Amice-V\'elu \cite{amicevelu} and Visik \cite{visik} constructed two $p$-adic $L$-functions, $L_\alpha$ and $L_\beta$, corresponding to the two roots ($\alpha$ and $\beta$) of the Hecke polynomial $X^2-a_pX+p$. These $p$-adic $L$-functions can be regarded as distributions on the Galois group $\Zp^\times$. When evaluated at Dirichlet characters of $p$-power conductor, these distributions interpolate the complex $L$-values of $E$ twisted by these characters.

Unlike the good ordinary case, where the $p$-adic $L$-function can be used to formulate the Iwasawa main conjecture of $E$ at $p$, the $p$-adic $L$-functions in the supersingular case are less suited for the study of Iwasawa theory. The main reason is that they turn out to be unbounded distributions, rather than bounded measures on $\Zp^\times$. In the case where $a_p=0$, Pollack \cite{pollack03} resolved this issue by defining the plus/minus logarithms, which we denote by $\log_p^\pm$, and proved that there exist two bounded measures, $L_+$ and $L_-$, such that
\begin{equation}\label{eq:pollack}
L_\lambda=\log_p^+L_++\lambda\log_p^-L_-,
\end{equation}
for $\lambda\in\{\alpha,\beta\}$.

Kobayashi \cite{kobayashi03} formulated an Iwasawa main conjecture using the $p$-adic $L$-functions constructed by Pollack and showed that one inclusion of this conjecture holds true. More recently, Wan \cite{wan} has announced a proof for the other inclusion of the conjecture. 

In \cite{pollack03}, $\log_p^\pm$ are defined to be power series given by infinite products of cyclotomic polynomials. In this article, we show that these power series may be described in terms of distributions. We define the following subsets of $\Zp$:
\begin{align*}
S_n^+&:=\{ a\in\Zp : \text{all even powers of $p$ vanish in the $p$-adic expansion of $a$ modulo $p^n$}\};\\
S_n^-&:=\{ a\in\Zp: \text{all odd powers of $p$ vanish in the $p$-adic expansion of $a$ modulo $p^n$}\}.
\end{align*}
Our main result describes the distributions attached to $\log_p^\pm$ in terms of these $S_n^\pm$.
\begin{theorem}[Corollary~\ref{values}]\label{thm:main}
Let $\mu_\pm$ be the distributions whose Amice transforms equal $\log_p^\pm$. Then, $\mu_+$ is characterized by
\[
\mu_+(a+p^n\Zp)=
\begin{cases}
p^{-\lfloor (n+2)/2 \rfloor}&\text{if $a\in S_n^+$,}\\
0&\text{otherwise.}
\end{cases}
\]
The distribution $\mu_-$ is characterized by
\[
\mu_-(a+p^n\Zp)=
\begin{cases}
p^{-\lfloor (n+3)/2 \rfloor}&\text{if $a\in S_n^-$,}\\
0&\text{otherwise.}
\end{cases}
\]
\end{theorem}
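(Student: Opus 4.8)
The plan is to avoid a direct evaluation of the infinite products $\log_p^\pm$ at $p$-power roots of unity and instead exploit two functional equations that follow from the elementary cyclotomic identity $\Phi_{p^{k+1}}(T)=\Phi_{p^k}(T^p)$. Write Pollack's product formula as $\log_p^+(1+X)=\tfrac1p\prod_{n\ge1}\Phi_{p^{2n}}(1+X)/p$ and $\log_p^-(1+X)=\tfrac1p\prod_{n\ge1}\Phi_{p^{2n-1}}(1+X)/p$, so in particular $\log_p^\pm(1)=1/p$ because $\Phi_{p^j}(1)=p$. A single index shift gives
\[
\log_p^+(1+X)=\log_p^-\bigl((1+X)^p\bigr),
\]
and pulling the $n=1$ factor $\Phi_p(1+X)/p$ out of the product for $\log_p^-$ and shifting the remaining index (then using the first identity) gives
\[
\log_p^-(1+X)=\frac{\Phi_p(1+X)}{p}\,\log_p^-\bigl((1+X)^{p^2}\bigr).
\]

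I would then translate these into identities among distributions using the standard Amice dictionary: multiplication of Amice transforms corresponds to convolution of distributions, and the substitution $X\mapsto(1+X)^m-1$ corresponds to pushforward along $x\mapsto mx$. Letting $\phi(x)=px$, $\psi(x)=p^2x$, the first equation reads $\mu_+=\phi_*\mu_-$, and the second becomes the self-referential relation $\mu_-=\lambda*\psi_*\mu_-$, where $\lambda=\tfrac1p\sum_{j=0}^{p-1}\delta_j$ is the distribution with Amice transform $\Phi_p(1+X)/p=\bigl((1+X)^p-1\bigr)/(pX)$. The only ``evaluation'' needed is the base value $\mu_-(\Zp)=\log_p^-(1)=1/p$ (the value at $X=0$ of the Amice transform).

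The rest is digit bookkeeping. Unwinding the convolution, for $n\ge2$,
\[
\mu_-(b+p^n\Zp)=\frac1p\sum_{j=0}^{p-1}\mu_-\bigl(\{x:\psi(x)\in b-j+p^n\Zp\}\bigr);
\]
since $\psi_*\mu_-$ is supported on $p^2\Zp$, only $j\equiv b\bmod p$ can contribute, and it does so precisely when the $p^1$-digit of $b$ vanishes, in which case that term equals $\tfrac1p\mu_-(c+p^{n-2}\Zp)$ with $c$ obtained from $b$ by deleting its two lowest $p$-adic digits. Iterating $\lfloor n/2\rfloor$ times down to the base cases $n\in\{0,1\}$ --- with $\mu_-(\Zp)=1/p$, and hence (applying the same recursion with $n=1$, where the digit constraint is vacuous) $\mu_-(a+p\Zp)=\tfrac1p\mu_-(\Zp)=p^{-2}$ --- shows that $\mu_-(b+p^n\Zp)$ vanishes unless every odd-indexed digit of $b$ below $p^n$ vanishes, i.e. unless $b\in S_n^-$, in which case it equals $p^{-\lfloor(n+3)/2\rfloor}$. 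Finally $\mu_+=\phi_*\mu_-$ gives $\mu_+(a+p^n\Zp)=\mu_-\bigl(\tfrac ap+p^{n-1}\Zp\bigr)$ when $p\mid a$ and $0$ otherwise; since the digits of $a/p$ are those of $a$ shifted down by one, the condition ``$p\mid a$ and $a/p\in S_{n-1}^-$'' is exactly ``$a\in S_n^+$'', and $p^{-\lfloor((n-1)+3)/2\rfloor}=p^{-\lfloor(n+2)/2\rfloor}$, giving the asserted formula for $\mu_+$.

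The one genuinely delicate point I anticipate is keeping the floor functions and parities straight through the iteration --- in particular checking that the recursion terminates correctly for both parities of $n$ and that the base cases $n=0,1$ are responsible for the ``$+1$'' in the exponent $\lfloor(n+3)/2\rfloor$ (and the ``$+2$'' for $\mu_+$, after the extra shift). A secondary, routine issue is justifying the Amice dictionary for possibly unbounded distributions on $\Zp$ (convergence of the integrals, well-definedness of pushforward and convolution), which should already be available from the earlier sections. One could instead proceed head-on --- evaluate $\log_p^\pm(\zeta)$ for each primitive $p^k$-th root of unity $\zeta$ (the product collapses to a finite one because $\Phi_{p^j}(\zeta)=p$ for $j>k$ and vanishes once $j=k$ has the right parity), then Fourier-invert via $\mu_\pm(a+p^n\Zp)=p^{-n}\sum_{\zeta^{p^n}=1}\zeta^{-a}\log_p^\pm(\zeta)$ --- but that route requires identifying a Gauss-sum-type expression in the $\zeta^{-a}$-twisted sum of the surviving cyclotomic factors, which is messier than the recursion above.
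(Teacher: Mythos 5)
Your argument is correct, and it takes a genuinely different route from the paper. The paper proceeds by direct Fourier inversion: it evaluates $\log_p^\pm$ at $p$-power roots of unity via Pollack's interpolation formula (Lemma~\ref{interpolation}), writes $\mu_\pm(a+p^n\Zp)=p^{-n}\sum_{\zeta\in\mu_{p^n}}\zeta^{-a}\log_p^\pm(\zeta-1)$, and then — this is the key combinatorial input you were worried about — observes that the truncated cyclotomic products expand as bare sums of monomials, $\prod_{j=1}^n\Phi_{2j}(x)=\sum_{i\in R_n^+}x^i$ with $R_n^+=\{\sum a_lp^{2l+1}\}$ (Lemmas~\ref{cycleven} and \ref{cyclodd}), so the twisted sum collapses by orthogonality of characters rather than requiring any Gauss-sum identity; the ``messiness'' you anticipated in the head-on route is thus absent. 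Your approach instead derives everything from the two functional equations $\log_p^+(X)=\log_p^-\bigl((1+X)^p-1\bigr)$ and $\log_p^-(X)=\frac{\Phi_1(1+X)}{p}\log_p^-\bigl((1+X)^{p^2}-1\bigr)$, translated via the Amice dictionary into $\mu_+=\phi_*\mu_-$ and $\mu_-=\lambda*\psi_*\mu_-$, plus the single value $\mu_-(\Zp)=1/p$; I checked the digit recursion and the bookkeeping of the floors and base cases, and they come out exactly as you claim. What your route buys is conceptual transparency: it exhibits $\mu_-$ as the unique fixed point of an explicit self-similarity, which makes the support condition $S_n^-$ and the exponent $\lfloor(n+3)/2\rfloor$ inevitable rather than computed, and it connects directly to the recursively defined Koblitz distribution mentioned in the introduction. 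What the paper's route buys is that it needs no discussion of pushforward and convolution for unbounded distributions (only the evaluation pairing against locally constant functions, which is unproblematic for tempered distributions of finite order), it reuses Pollack's interpolation lemma which is needed elsewhere anyway, and — via Proposition~\ref{prop:2var} — it extends verbatim to the two-variable logarithms $\log_p^{\ast\circ}$, where your convolution identities would have to be redone factor by factor. The one point you should make explicit if you write this up is the injectivity of the Amice transform on distributions of order $1/2$ (needed to pass from the power-series identities to the identities $\mu_+=\phi_*\mu_-$ and $\mu_-=\lambda*\psi_*\mu_-$), together with the fact that $\phi_*$ and convolution by the measure $\lambda$ preserve this class; both are standard but are doing real work in your argument.
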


It is a straightforward exercise to verify that the values given in Theorem~\ref{thm:main} are additive. In fact, the values of $\mu_-(a+p^n\Zp)$ are precisely those given in \cite[Chapter II, \S3, Exercise 7]{koblitz} (after multiplying by $p$).

Since the works of Kobayashi \cite{kobayashi03} and Pollack \cite{pollack03}, many attempts to generalize the plus/minus Iwasawa theory have been made. For example, Darmon-Iovita \cite{darmoniovita}, Longo-Vigni \cite{longovigni}, Castella-Wan \cite{castellawan} have studied the behaviour of an elliptic curve with supersingular reduction at $p$ over the anticyclotomic $\Zp$-extension of an imaginary quadratic field. Iovita-Pollack \cite{iovitapollack} have done something similar for more general $\Zp$-extensions of a number field. In all these works, the plus and minus logarithms play an important role.

Another direction that has been taken was the study of two-variable plus/minus Iwasawa theory as introduced by  Loeffler \cite{Loeffler14} and Kim \cite{Kim14,kimselmer}. More specifically, they studied the $p$-adic $L$-functions and Selmer groups of $E$ over the $\Zp^2$-extension of an imaginary quadratic field $K$ where $p$ splits. This leads to the construction of four 2-variable $p$-adic $L$-functions, say $L_{\lambda,\nu}$, where $\lambda,\nu\in\{\alpha,\beta\}$. The two variables correspond to the two primes of $K$ lying above $p$. A similar decomposition to \eqref{eq:pollack} exists, namely,
\[
L_{\lambda,\nu}=\log_p^{++}L_{++}+\lambda\log_p^{-+}L_{-+}+\nu\log_p^{+-}L_{+-}+\log_p^{--}L_{--},
\]
where $\log_p^{\pm\pm}$ are some two-variable plus/minus logarithms, defined by the product of two one-variable plus/minus logarithms, and $L_{\pm\pm}$ are some bounded two-variable $p$-adic $L$-functions. We shall show that we may deduce from Theorem~\ref{thm:main} a similar description of the two-variable plus/minus logarithms in terms of some explicit distributions on $\Zp^2$ (see Corollary~\ref{cor:final2} below).

\section*{Acknowledgment}
The authors would like to thank Daniel Delbourgo for answering our questions during the preparation of this article. The authors' research was supported by the  NSERC Discovery Grants Program 05710 and an ISM summer scholarship.

\section{Definition and properties of Pollack's plus/minus logarithms}

In this section, we recall the definition and some basic properties of Pollack's plus/minus logarithms from  \cite{pollack03}. We begin by introducing some notation.

Let $\Phi_n(T) = \sum_{t=0}^{p-1}T^{p^{n-1}t}$ be the $p^n$-th cyclotomic polynomial. We shall also write $\zeta_k$ for a primitive $p^k$-th root of unity.

\begin{align*}
\log_{p}^{+}(T) &:= \frac{1}{p}\prod_{n=1}^{\infty}\frac{\Phi_{2n}(1+T)}{p}, \\ \log_{p}^{-}(T) &:= \frac{1}{p}\prod_{n=1}^{\infty}\frac{\Phi_{2n-1}(1+T)}{p}.
\end{align*}

Pollack showed that the products above converge and give two power series in $\Qp \llbracket T \rrbracket$. Furthermore, both $\log_p^\pm$ converge on the open unit disc $\{x\in \mathbb{C}_p:|x|_p<1\}$. In fact, they are related to the usual $p$-adic logarithm via the formula
\[
\log_p^+(T)\log_p^-(T)=\frac{\log_p(1+T)}{p^2T},
\]
hence their names. These logarithms satisfy the following interpolation formulae.

\begin{lemma}\label{interpolation}
Let $n\ge 1$ be an integer. Then,
\begin{align*}
\log_p^{+}(\zeta_n-1) &= \begin{cases} 0 & 2|n, \\ p^{-(n+1)/2}\prod_{j=1}^{(n-1)/2}\Phi_{2j}(\zeta_n) & 2 \nmid n, \end{cases} \\
\log_p^{-}(\zeta_n-1) &= \begin{cases} p^{-n/2-1}\prod_{j=1}^{n/2}\Phi_{2j-1}(\zeta_n) & \quad 2|n, \\ 0 & \quad 2 \nmid n. \end{cases}
\end{align*}
\end{lemma}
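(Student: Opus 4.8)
The plan is to substitute $1+T=\zeta_n$ directly into the infinite products defining $\log_p^{\pm}$ and to observe that, at these particular points, the products collapse to finite ones. The only elementary input required is the value of $\Phi_k(\zeta_n)$ in the three regimes $k>n$, $k=n$, and $k<n$. Writing $\Phi_k(X)=\sum_{t=0}^{p-1}X^{p^{k-1}t}$: when $k>n$ we have $\zeta_n^{p^{k-1}}=1$ since $p^{k-1}$ is a multiple of $p^n$, so every summand equals $1$ and $\Phi_k(\zeta_n)=p$; when $k=n$, $\zeta_n$ is a primitive $p^n$-th root of unity and hence a root of the $p^n$-th cyclotomic polynomial $\Phi_n$, so $\Phi_n(\zeta_n)=0$; when $k<n$, $\Phi_k(\zeta_n)=(\zeta_n^{p^k}-1)/(\zeta_n^{p^{k-1}}-1)$ is a nonzero algebraic number that I will simply carry along without evaluating further.

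Feeding this into the definitions, first consider $\log_p^{+}(\zeta_n-1)=\frac1p\prod_{j\ge1}\frac{\Phi_{2j}(\zeta_n)}{p}$. If $n$ is even, the index $j=n/2$ contributes the factor $\Phi_n(\zeta_n)/p=0$, forcing $\log_p^{+}(\zeta_n-1)=0$. If $n$ is odd, then $2j$ is never equal to $n$; for $2j>n$ the factor $\Phi_{2j}(\zeta_n)/p$ equals $1$, so only the indices $j=1,\dots,(n-1)/2$ survive, and pulling out the powers of $p$ gives $p^{-(n+1)/2}\prod_{j=1}^{(n-1)/2}\Phi_{2j}(\zeta_n)$, as asserted. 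The computation for $\log_p^{-}(\zeta_n-1)=\frac1p\prod_{j\ge1}\frac{\Phi_{2j-1}(\zeta_n)}{p}$ is the mirror image: when $n$ is odd the index $j=(n+1)/2$ makes the zero factor $\Phi_n(\zeta_n)/p$ appear and the value vanishes, while when $n$ is even the surviving indices are $j=1,\dots,n/2$, yielding $p^{-n/2-1}\prod_{j=1}^{n/2}\Phi_{2j-1}(\zeta_n)$. I would present the $n$ odd and $n$ even cases in parallel to keep the write-up short.

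The only point deserving care, and the closest thing to an obstacle, is the interchange of evaluation at $\zeta_n-1$ with the infinite product, i.e. the assertion that $\log_p^{\pm}(\zeta_n-1)$ is genuinely the limit of the partial products evaluated at $\zeta_n-1$. This is immediate from the facts, recalled above from Pollack, that $\log_p^{\pm}$ lies in $\Qp\llbracket T\rrbracket$ and converges on the open unit disc (which contains $\zeta_n-1$), combined with the observation that all factors whose cyclotomic index exceeds $n$ are exactly $1$ at this point, so the partial products are eventually constant and equal to the displayed finite product; in the vanishing cases one of these stabilized partial products already contains the zero factor $\Phi_n(\zeta_n)/p$, so the limit is $0$ and no genuine analytic convergence estimate is needed. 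Everything else is a direct calculation.
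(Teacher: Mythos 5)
Your argument is correct, but it is worth noting that the paper does not prove this lemma at all: its ``proof'' is a one-line citation of Pollack's Lemma~4.7. What you have written is a self-contained derivation directly from the defining infinite products, which is essentially the computation Pollack himself carries out. The three-regime evaluation of $\Phi_k(\zeta_n)$ is right (for $k>n$ one has $p^{k-1}\equiv 0 \bmod p^n$ in the exponents, so each of the $p$ summands is $1$ and the factor $\Phi_k(\zeta_n)/p$ equals $1$; for $k=n$ the factor vanishes since $\zeta_n$ is a root of the $p^n$-th cyclotomic polynomial; for $k<n$ the factor is a nonzero unit times a uniformizer that you correctly leave unevaluated), and the bookkeeping of the powers of $p$ matches the stated formulae in all four cases. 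The one point where you are slightly loose is the justification that evaluation at $\zeta_n-1$ commutes with the infinite product: knowing merely that the limiting power series lies in $\Qp\llbracket T\rrbracket$ and converges on the open unit disc is not by itself enough; what one really uses is that the partial products converge to $\log_p^{\pm}$ in Pollack's $\mathcal{H}$-topology (equivalently, uniformly on every closed sub-disc of radius $<1$), which does imply pointwise convergence at $\zeta_n-1$. Since you explicitly flag this issue and the needed convergence is exactly what Pollack establishes when constructing $\log_p^{\pm}$, this is a presentational quibble rather than a gap. The trade-off between the two approaches is the usual one: the paper's citation is shorter, while your version makes the article self-contained and, as a bonus, makes Remark~2.2 of the paper (the stability of the finite product when the root of unity has smaller order than the truncation level) completely transparent, since it is just the statement that the extra factors equal $1$.
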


\begin{proof}
This is \cite[Lemma~4.7]{pollack03}.
\end{proof}

\begin{remark}\label{rk:interpolate} Let $k<n$ be two odd positive integers, then 
\begin{equation*}
p^{-(k+1)/2}\prod_{j=1}^{(k-1)/2}\Phi_{2j}(\zeta_k) = p^{-(n+1)/2}\prod_{j=1}^{(n-1)/2}\Phi_{2j}(\zeta_k).
\end{equation*}
This is because,  every term with index greater than $(k-1)/2$ on the right-hand side equals $p$. So, it does not change the value of the product as it gets cancelled by the extra factors of $p$ in $p^{-(n+1)/2}$. In other words, we may write
\[
\log_p^{+}(\zeta_k-1) = p^{-(n+1)/2}\prod_{j=1}^{(n-1)/2}\Phi_{2j}(\zeta_k) .
\]
Similarly, we have
\[
\log_p^{-}(\zeta_k-1) =  p^{-n/2-1}\prod_{j=1}^{n/2}\Phi_{2j-1}(\zeta_k) 
\]
if $k<n$ are two even positive integers.
\end{remark}

\begin{remark}
It is proved in \cite[Lemma~4.5]{pollack03} that both $\log_p^\pm$ are $o(\log_p)$. Hence, they are uniquely determined by the interpolation formulae given in Lemma~\ref{interpolation}.
\end{remark}

\section{Proof of Theorem~\ref{thm:main}}
In this section, we give a detailed proof of the main result of this article Theorem~\ref{thm:main}. We begin by some preliminary lemmas.

The original definition of the plus and minus logarithms are given by infinite products of even and odd $p$-power cyclotomic polynomials respectively. We shall express the truncated product of these polynomials in the form $\sum a_ix^i$. 

For an integer $n\ge1$, we define 
\begin{align*}
R_n^+&=\left\{ \sum_{l=0}^{n-1} a_lp^{2l+1} : a_l \in \{0,1,\ldots,p-1\} \right\},\\
R_n^-&=\left\{ \sum_{l=0}^{n-1} a_lp^{2l} : a_l \in \{0,1,\ldots,p-1\} \right\}.
\end{align*}
These sets, which are related to the sets $S_n^\pm$ given in the introduction, will play an important role in our proof of Theorem~\ref{thm:main}.

\begin{lemma}\label{cycleven}
Let $n \geq 1$. We have the formula
\begin{equation*}
\prod_{j=1}^{n}\Phi_{2j}(x) = \sum_{i\in R_n^+} x^i.
\end{equation*}
\end{lemma}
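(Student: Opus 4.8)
The plan is to proceed by induction on $n$. The base case $n=1$ asks that $\Phi_2(x) = \sum_{t=0}^{p-1} x^{pt} = \sum_{i \in R_1^+} x^i$, which is immediate from the definition $\Phi_2(x) = \sum_{t=0}^{p-1} x^{p^{1}t}$ and the fact that $R_1^+ = \{a_0 p : a_0 \in \{0,\dots,p-1\}\} = \{0, p, 2p, \dots, (p-1)p\}$.

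For the inductive step, I would assume $\prod_{j=1}^{n}\Phi_{2j}(x) = \sum_{i \in R_n^+} x^i$ and multiply both sides by $\Phi_{2(n+1)}(x) = \sum_{t=0}^{p-1} x^{p^{2n+1}t}$. This gives
\[
\prod_{j=1}^{n+1}\Phi_{2j}(x) = \left(\sum_{i \in R_n^+} x^i\right)\left(\sum_{t=0}^{p-1} x^{p^{2n+1}t}\right) = \sum_{i \in R_n^+}\sum_{t=0}^{p-1} x^{i + t p^{2n+1}}.
\]
The key combinatorial point is that every element of $R_n^+$ has the form $\sum_{l=0}^{n-1} a_l p^{2l+1}$ with $a_l \in \{0,\dots,p-1\}$, so it is a nonnegative integer strictly less than $p^{2n-1} \le p^{2n+1}$; hence adding $t p^{2n+1}$ for $t \in \{0,\dots,p-1\}$ produces, without any carrying or collisions, exactly the integers $\sum_{l=0}^{n} a_l p^{2l+1}$ with $a_n = t$ ranging over $\{0,\dots,p-1\}$ and the lower coefficients unchanged. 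That set of integers is precisely $R_{n+1}^+$, and each such integer arises exactly once, so the double sum collapses to $\sum_{i \in R_{n+1}^+} x^i$, completing the induction.

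The only thing requiring a small amount of care — and the closest thing to an obstacle — is checking the size bound $\max R_n^+ = (p-1)\sum_{l=0}^{n-1} p^{2l+1} < p^{2n+1}$ so that the exponents $i$ and $i + tp^{2n+1}$ never overlap as $t$ varies; this guarantees that the map $(i,t) \mapsto i + tp^{2n+1}$ from $R_n^+ \times \{0,\dots,p-1\}$ to $R_{n+1}^+$ is a bijection, which is what makes the monomials in the product all distinct with coefficient $1$. Everything else is a routine unwinding of the definitions.
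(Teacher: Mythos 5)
Your proof is correct and follows essentially the same route as the paper: induction on $n$, with the inductive step multiplying by $\Phi_{2(n+1)}(x)=\sum_{t=0}^{p-1}x^{p^{2n+1}t}$ and identifying the resulting exponent set with $R_{n+1}^+$. The only difference is that you explicitly verify the no-collision bound $\max R_n^+ < p^{2n+1}$, which the paper leaves as ``clear from definition''; this is a welcome extra detail but not a change of method.
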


\begin{proof}
We prove this formula by induction on $n$. For the base case $n=1$, the sum on the right-hand side of the formula is  simply the definition of the cyclotomic polynomial $\Phi_2(x)$.

We now suppose that the lemma holds for $n$ and we shall show that it also holds for $n+1$. 
 Our inductive hypothesis implies that
\begin{equation*}
\begin{split}
\prod_{j=1}^{n+1}\Phi_{2j}(x) &= \Phi_{2(n+1)}(x)\prod_{j=1}^{n}\Phi_{2j}(x) \\
&=\left(x^{p^{2n+1}(p-1)} + \cdots + x^{p^{2n+1}} +1\right)\sum_{i\in R_n^+} x^i\\
&=\sum_{j=0}^{p-1}\sum_{i\in R_n^+} x^{i+jp^{2n+1}} .
\end{split}
\end{equation*}
But it is clear from definition that
\[
R_{n+1}^+=\left\{i+jp^{2n+1}:i\in R_n^+,0\le j \le p-1\right\}.
\]
This completes the induction.
\end{proof}

We have the following analogous result for the product of odd  $p$-power cyclotomic polynomials.
\begin{lemma}\label{cyclodd}
Let $n \geq 1$  be an integer. Then,
\begin{equation*}
\prod_{j=1}^{n}\Phi_{2j-1}(x) = \sum_{i\in R_n^-} x^{i}.
\end{equation*}
\end{lemma}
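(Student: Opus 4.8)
The plan is to mirror the proof of Lemma~\ref{cycleven} almost verbatim, replacing the even cyclotomic polynomials $\Phi_{2j}$ by the odd ones $\Phi_{2j-1}$ and the sets $R_n^+$ by $R_n^-$. We proceed by induction on $n$. For the base case $n=1$, note that $R_1^-=\{0,1,\ldots,p-1\}$, so $\sum_{i\in R_1^-}x^i = 1+x+\cdots+x^{p-1} = \Phi_1(x)$, which is exactly the claimed formula.

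For the inductive step, assume the formula holds for $n$. Using the definition $\Phi_{2n+1}(x) = \sum_{t=0}^{p-1}x^{p^{2n}t} = x^{p^{2n}(p-1)}+\cdots+x^{p^{2n}}+1$ together with the inductive hypothesis, I would compute
\begin{equation*}
\begin{split}
\prod_{j=1}^{n+1}\Phi_{2j-1}(x) &= \Phi_{2n+1}(x)\prod_{j=1}^{n}\Phi_{2j-1}(x) \\
&= \left(x^{p^{2n}(p-1)}+\cdots+x^{p^{2n}}+1\right)\sum_{i\in R_n^-}x^i \\
&= \sum_{j=0}^{p-1}\sum_{i\in R_n^-}x^{i+jp^{2n}}.
\end{split}
\end{equation*}
It then remains to observe that, directly from the definitions,
\[
R_{n+1}^- = \left\{i+jp^{2n} : i\in R_n^-,\ 0\le j\le p-1\right\},
\]
since an element of $R_{n+1}^-$ is a sum $\sum_{l=0}^{n}a_lp^{2l}$ and separating off the top term $a_np^{2n}$ leaves precisely an element of $R_n^-$. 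This completes the induction.

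I do not anticipate any real obstacle here: the argument is structurally identical to that of Lemma~\ref{cycleven}, the only bookkeeping difference being that odd cyclotomic polynomials $\Phi_{2j-1}$ contribute powers of $p^{2n}$ (even exponents of $p$) rather than $p^{2n+1}$, which is exactly what distinguishes $R_n^-$ from $R_n^+$. One minor point worth stating carefully is that the unions over $j$ in the decomposition of $R_{n+1}^-$ are disjoint — the $p$-adic digit $a_n$ is uniquely determined — so that the coefficients in the polynomial identity really do match term by term; but this is immediate from uniqueness of base-$p$ expansions.
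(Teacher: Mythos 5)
Your proof is correct and is exactly the argument the paper intends: the paper's proof of this lemma simply says it is analogous to that of Lemma~\ref{cycleven}, and your induction (with base case $\Phi_1(x)=1+x+\cdots+x^{p-1}$ matching $R_1^-=\{0,1,\ldots,p-1\}$, and the inductive step peeling off the digit $a_np^{2n}$) is precisely that analogue, with the bookkeeping on the exponents of $p$ handled correctly.
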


\begin{proof}The proof is analogous to that of Lemma~\ref{cycleven}.
\end{proof}

We now recall the definition of the Amice transform.

\begin{definition}
The Amice transform of a distribution $\mu$ on $\Zp$ is defined to be $$A_{\mu}(T) = \int_{\Zp} (1+T)^x \mu(x).$$
\end{definition}

\begin{definition}
Let $\mu_{+}$ be the distribution associated to $\log_p^{+}$ and let $\mu_{-}$ be that associated to $\log_p^{-}$.
\end{definition}

If $z\in \mathbb{C}_p$ satisfies $|z|_p<1$, it is immediate from the definition of $\mu_\pm$ that $\int_{\Zp}z^x \mu_{\pm}(x) = \log_p^{\pm}(z -1)$.

\begin{proposition}\label{inter}
Let $n\ge1$ be an integer and fix $a\in \Zp$. 
The distribution $\mu_{+}$ satisfies
\begin{equation*}
\mu_{+}(a+p^n\Zp) = \frac{1}{p^{\lfloor(3n+2)/2\rfloor}}\sum_{\zeta \in \mu_{p^n}}\zeta^{-a}\prod_{j=1}^{\lfloor n/2\rfloor}\Phi_{2j}(\zeta)
\end{equation*}
and the distribution $\mu_{-}$ satisfies
\begin{equation*}
\mu_{-}(a+p^n\Zp) = \frac{1}{p^{\lfloor (3n+1)/2\rfloor+1}}\sum_{\zeta \in \mu_{p^n}}\zeta^{-a}\prod_{j=1}^{\lfloor (n+1)/2 \rfloor}\Phi_{2j-1}(\zeta).
\end{equation*}
\end{proposition}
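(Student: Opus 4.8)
The plan is to recover $\mu_\pm(a+p^n\Zp)$ from the known values of the Amice transform at $\zeta-1$ for $\zeta$ a $p$-power root of unity, using the standard Fourier-type inversion formula for locally constant characters on $\Zp$. Concretely, for any distribution $\mu$ on $\Zp$ and any $a\in\Zp$, $n\ge1$, one has
\[
\mu(a+p^n\Zp)=\frac{1}{p^n}\sum_{\zeta\in\mu_{p^n}}\zeta^{-a}A_\mu(\zeta-1),
\]
since $\frac{1}{p^n}\sum_{\zeta\in\mu_{p^n}}\zeta^{x-a}$ is the characteristic function of $a+p^n\Zp$. Applying this with $A_{\mu_\pm}=\log_p^\pm$ and using the relation $\int_{\Zp}z^x\mu_\pm(x)=\log_p^\pm(z-1)$ already noted in the excerpt reduces everything to evaluating $\log_p^\pm(\zeta-1)$, which is exactly the content of Lemma~\ref{interpolation}.

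Next I would split the sum over $\zeta\in\mu_{p^n}$ according to the exact order $p^k$ of $\zeta$, for $0\le k\le n$. For $\mu_+$, Lemma~\ref{interpolation} tells us that the term vanishes whenever the order $p^k$ of $\zeta$ has $k$ even (in particular $k=0$), so only odd $k$ in the range $1\le k\le n$ contribute; for such $k$, $\log_p^+(\zeta-1)=p^{-(k+1)/2}\prod_{j=1}^{(k-1)/2}\Phi_{2j}(\zeta)$. Here is where Remark~\ref{rk:interpolate} does the crucial bookkeeping: since the truncated product $\prod_{j=1}^{\lfloor n/2\rfloor}\Phi_{2j}(\zeta)$ appearing in the proposition has more factors than $\prod_{j=1}^{(k-1)/2}\Phi_{2j}(\zeta)$ when $k<n$, and each extra factor $\Phi_{2j}(\zeta)$ equals $p$ (because $\zeta$ has order dividing $p^k$ with $2j>k$), the remark lets me replace the $k$-dependent product and power of $p$ by the single uniform expression $p^{-(n+1)/2}\prod_{j=1}^{\lfloor n/2\rfloor}\Phi_{2j}(\zeta)$ valid for every odd $k\le n$ (and also, trivially, the vanishing terms for even $k$ can be absorbed since $\prod\Phi_{2j}(\zeta)$ is then a product over a range including a vanishing or redundant factor — more precisely one checks the even-$k$ terms genuinely cancel, so they may simply be dropped). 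After this substitution the sum over all $\zeta\in\mu_{p^n}$ of $\zeta^{-a}\prod_{j=1}^{\lfloor n/2\rfloor}\Phi_{2j}(\zeta)$ reassembles cleanly, and combining $\frac{1}{p^n}$ with $p^{-(n+1)/2}$ (treating the parity of $n$ carefully) yields the exponent $\lfloor(3n+2)/2\rfloor$. The argument for $\mu_-$ is identical, using instead the even-order interpolation values $\log_p^-(\zeta-1)=p^{-k/2-1}\prod_{j=1}^{k/2}\Phi_{2j-1}(\zeta)$ and the second half of Remark~\ref{rk:interpolate}, giving the exponent $\lfloor(3n+1)/2\rfloor+1$ and the product up to $\lfloor(n+1)/2\rfloor$.

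The main obstacle I anticipate is purely combinatorial: matching the floor-function exponents and the upper limits of the truncated products to the parity of $n$, and making sure that the "extra factor equals $p$" reduction of Remark~\ref{rk:interpolate} is applied with the correct range — in particular checking the boundary case $k=n$ (where no reduction is needed) and confirming that the terms coming from $\zeta$ of the "wrong" parity order really do contribute zero to the reassembled product rather than spurious nonzero terms. None of this is deep, but it requires care to get the $\lfloor\,\cdot\,\rfloor$'s exactly right; once the uniform interpolation expressions from the remark are in place, the Fourier inversion collapses the sum immediately. I would organize the write-up as: (i) state the inversion formula; (ii) substitute Lemma~\ref{interpolation}; (iii) invoke Remark~\ref{rk:interpolate} to make the summand $\zeta$-uniform across the relevant orders; (iv) read off the constant by a short parity computation.
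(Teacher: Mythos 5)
Your overall strategy --- Fourier inversion of the characteristic function $\chi_{a+p^n\Zp}(x)=\frac{1}{p^n}\sum_{\zeta\in\mu_{p^n}}\zeta^{x-a}$, followed by the interpolation formula of Lemma~\ref{interpolation} and the uniformization of Remark~\ref{rk:interpolate} --- is exactly the paper's argument, and your parity bookkeeping for the exponents is correct. The one genuine error is your treatment of $\zeta=1$. You assert that the term vanishes whenever the order $p^k$ of $\zeta$ has $k$ even, ``in particular $k=0$''. But Lemma~\ref{interpolation} is stated only for $n\ge1$ and says nothing about $\zeta=1$; in fact
\[
\log_p^+(0)=\frac{1}{p}\prod_{m\ge1}\frac{\Phi_{2m}(1)}{p}=\frac{1}{p}\neq 0,
\]
so the $\zeta=1$ term contributes $p^{-(n+1)}$ to $\frac{1}{p^n}\sum_{\zeta}\zeta^{-a}\log_p^+(\zeta-1)$ rather than $0$. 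If you literally drop it, the reassembly fails: the $\zeta=1$ summand of $\sum_{\zeta\in\mu_{p^n}}\zeta^{-a}\prod_{j=1}^{\lfloor n/2\rfloor}\Phi_{2j}(\zeta)$ equals $p^{\lfloor n/2\rfloor}\neq0$, and omitting it destroys the orthogonality relation $\sum_{\zeta\in\mu_{p^n}}\zeta^{i-a}\in\{0,p^n\}$ on which Corollary~\ref{values} relies, so the stated values of $\mu_+$ would come out wrong (e.g.\ for $a=0$).

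The fix is a one-line check rather than a new idea: at $\zeta=1$ the uniform expression gives $p^{-(n+1)/2}\prod_{j=1}^{(n-1)/2}\Phi_{2j}(1)=p^{-(n+1)/2}\cdot p^{(n-1)/2}=p^{-1}$, which agrees with $\log_p^+(0)$; so $\zeta=1$ may be kept in the uniformized sum because the two expressions coincide there, not because the term vanishes. The analogous verification $\log_p^-(0)=1/p$ is needed on the minus side, where your range of ``contributing'' even $k$ again includes $k=0$, which lies outside the scope of Lemma~\ref{interpolation}. For $k$ even with $2\le k\le n$ your reasoning is sound, since $\Phi_k(\zeta)=0$ when $\zeta$ has exact order $p^k$ and this factor does occur in the truncated product. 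Everything else, including the exponents $\lfloor(3n+2)/2\rfloor$ and $\lfloor(3n+1)/2\rfloor+1$, checks out; note that the paper sidesteps the stratification by exact order entirely by applying the uniform formula of Remark~\ref{rk:interpolate} to every $\zeta\in\mu_{p^n}$ at once (viewing them as $p^{n+1}$-st roots of unity when $n$ has the wrong parity), which is slightly cleaner than summing over exact orders.
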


\begin{proof}
We shall only give the proof for $\mu_+$ since that for $\mu_-$ is similar.
Let us write $\chi_{a+p^n\Zp}$ for the characteristic function of $a+p^n\Zp$. It can be decomposed as 
\[\chi_{a+p^n\Zp}(x) = \frac{1}{p^n}\sum_{\zeta \in \mu_{p^n}} \zeta^{x-a}.\]
By linearity, we have
\begin{equation*}
\begin{split}
\mu_{+}(a+p^n\Zp) &= \int_{\Zp}\chi_{a+p^n\Zp}(x)\mu_{+}(x) \\
&= \sum_{\zeta \in \mu_{p^n}} \frac{\int_{\Zp} \zeta^{x-a}\mu_{+}(x)}{p^n} \\
&=\frac{1}{p^n}\sum_{\zeta \in \mu_{p^n}}\zeta^{-a}\int_{\Zp}\zeta^x\mu_{+} \\
&=\frac{1}{p^n}\sum_{\zeta \in \mu_{p^n}}\zeta^{-a}\log_p^{+}(\zeta-1)
\end{split}
\end{equation*}

Suppose that $n$ is odd. Lemma~\ref{interpolation} together with Remark~\ref{rk:interpolate} allow us to rewrite this sum as
\begin{equation*}
\begin{split}
\mu_{+}(a+p^n\Zp) &= \frac{1}{p^n} \sum_{\zeta \in \mu_{p^n}}\zeta^{-a}p^{-(n+1)/2} \prod_{j=1}^{(n-1)/2} \Phi_{2j}(\zeta)\\
&= \frac{1}{p^{(3n+1)/2}} \sum_{\zeta \in \mu_{p^n}}\zeta^{-a} \prod_{j=1}^{(n-1)/2} \Phi_{2j}(\zeta).
\end{split}
\end{equation*}

When $n$ is even, $n+1$ is odd and every $\zeta$ in the sum above is a $p^{n+1}$-st root of unity. On applying  Lemma~\ref{interpolation} and Remark~\ref{rk:interpolate} with $n$ replaced by  $n+1$,  we deduce that
\begin{align*}
\mu_{+}(a+p^n\Zp) &=\frac{1}{p^n} \sum_{\zeta \in \mu_{p^n}}\zeta^{-a}p^{-(n+2)/2} \prod_{j=1}^{n/2} \Phi_{2j}(\zeta)\\
&=\frac{1}{p^{(3n+2)/2}} \sum_{\zeta \in \mu_{p^n}}\zeta^{-a} \prod_{j=1}^{n/2} \Phi_{2j}(\zeta).
\end{align*}
This finishes the proof.
\end{proof}

\begin{corollary}\label{final}Let $a$ and $n$ be as given in Proposition~\ref{inter}. Then,
\begin{align*}
\mu_{+}(a+p^n\Zp) &= \frac{1}{p^{\lfloor (3n+2)/2 \rfloor}}\sum_{i\in R_{\lfloor n/2\rfloor}^+}\sum_{\zeta \in \mu_{p^n}}\zeta^{i-a}, \\
\mu_{-}(a+p^n\Zp) &=\frac{1}{p^{\lfloor (3n+1)/2 \rfloor +1}}\sum_{i\in R_{\lfloor (n+1)/2\rfloor}^{-}}\sum_{\zeta \in \mu_{p^n}}\zeta^{i-a}.
\end{align*}
\end{corollary}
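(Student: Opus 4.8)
The plan is to combine Proposition~\ref{inter} with the explicit expansions of the cyclotomic products obtained in Lemma~\ref{cycleven} and Lemma~\ref{cyclodd}. For the plus case, Proposition~\ref{inter} gives
\[
\mu_{+}(a+p^n\Zp) = \frac{1}{p^{\lfloor(3n+2)/2\rfloor}}\sum_{\zeta \in \mu_{p^n}}\zeta^{-a}\prod_{j=1}^{\lfloor n/2\rfloor}\Phi_{2j}(\zeta),
\]
so I would substitute the identity $\prod_{j=1}^{\lfloor n/2\rfloor}\Phi_{2j}(x) = \sum_{i\in R_{\lfloor n/2\rfloor}^+} x^i$ from Lemma~\ref{cycleven} (applied with the integer $n$ of that lemma taken to be $\lfloor n/2\rfloor$), evaluate at $x=\zeta$, and then swap the two finite sums. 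This immediately yields the stated formula for $\mu_+$. The minus case is handled identically, using Lemma~\ref{cyclodd} with its index set $R_{\lfloor (n+1)/2\rfloor}^-$ and the prefactor $p^{-\lfloor (3n+1)/2\rfloor-1}$ coming from the second half of Proposition~\ref{inter}.

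The only point requiring a moment's care is the edge case $\lfloor n/2\rfloor = 0$, which occurs when $n=1$ in the plus case. There the product over $j$ is empty, hence equal to $1$; one should check that $R_0^+$, read from the definition of $R_n^+$ as an empty sum ranging over $l$ from $0$ to $-1$, is the singleton $\{0\}$, so that $\sum_{i\in R_0^+} x^i = 1$ as well. With that convention in place, Lemma~\ref{cycleven} and Lemma~\ref{cyclodd} hold for all $n\ge 0$ and the substitution is uniform. I do not anticipate any genuine obstacle here: the corollary is a direct algebraic consequence of the two preceding results, and the sole subtlety is bookkeeping around empty products and the corresponding degenerate index sets.

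\begin{proof}
We treat $\mu_+$; the argument for $\mu_-$ is identical, using Lemma~\ref{cyclodd} in place of Lemma~\ref{cycleven}. By Lemma~\ref{cycleven} applied with $n$ replaced by $\lfloor n/2\rfloor$ (with the convention that an empty product equals $1$ and $R_0^+=\{0\}$), we have
\[
\prod_{j=1}^{\lfloor n/2\rfloor}\Phi_{2j}(x) = \sum_{i\in R_{\lfloor n/2\rfloor}^+} x^i.
\]
Substituting $x=\zeta$ for $\zeta\in\mu_{p^n}$ into the formula of Proposition~\ref{inter} and interchanging the two finite sums gives
\[
\mu_{+}(a+p^n\Zp) = \frac{1}{p^{\lfloor (3n+2)/2 \rfloor}}\sum_{\zeta \in \mu_{p^n}}\zeta^{-a}\sum_{i\in R_{\lfloor n/2\rfloor}^+}\zeta^{i} = \frac{1}{p^{\lfloor (3n+2)/2 \rfloor}}\sum_{i\in R_{\lfloor n/2\rfloor}^+}\sum_{\zeta \in \mu_{p^n}}\zeta^{i-a}.
\]
The same computation with Lemma~\ref{cyclodd}, the index set $R_{\lfloor (n+1)/2\rfloor}^-$, and the normalising factor $p^{-\lfloor (3n+1)/2\rfloor-1}$ from Proposition~\ref{inter} yields the corresponding formula for $\mu_{-}$.
\end{proof}
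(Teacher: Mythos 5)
Your proof is correct and is essentially the paper's own argument: combine Proposition~\ref{inter} with Lemmas~\ref{cycleven} and \ref{cyclodd}, substitute the expansion of the cyclotomic product, and swap the finite sums. The extra care you take with the degenerate case $\lfloor n/2\rfloor=0$ (empty product versus $R_0^+=\{0\}$) is a reasonable bookkeeping point that the paper leaves implicit.
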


\begin{proof}
This follows from combining Proposition~\ref{inter} with Lemmas~\ref{cycleven} and \ref{cyclodd}.
\end{proof}

Let us now recast the sets $S_n^\pm$ given in the introduction as follows:  
\begin{align*}
S_n^+&=\{a\in \Zp:\exists b\in R_{\lfloor n/2\rfloor}^+, a\equiv b\mod p^n\},\\
S_n^-&=\{a\in \Zp:\exists b\in R_{\lfloor (n+1)/2\rfloor}^-, a\equiv b\mod p^n\}.
\end{align*}

\begin{corollary}\label{values}
Let $a$ and $n$ be as given in Proposition~\ref{inter}. The values of $\mu_{\pm}(a+p^n\Zp)$ are given by
\begin{align*}
\mu_{+}(a+p^n\Zp) &= \begin{cases} p^{-\lfloor (n+2)/2 \rfloor} & \text{if} \ a \in S_n^{+}, \\ 0 & \text{otherwise}, \end{cases} \\
\mu_{-}(a+p^n\Zp) &=  \begin{cases} p^{-\lfloor (n+3)/2 \rfloor} & \text{if} \ a \in S_n^{-}, \\ 0 & \text{otherwise}. \end{cases}
\end{align*}
\end{corollary}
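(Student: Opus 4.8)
The plan is to deduce Corollary~\ref{values} from Corollary~\ref{final} by evaluating the inner character sum $\sum_{\zeta\in\mu_{p^n}}\zeta^{i-a}$ for each fixed $i$ in the relevant index set. The key observation is the standard orthogonality relation: for an integer $m$, one has $\sum_{\zeta\in\mu_{p^n}}\zeta^m = p^n$ if $p^n\mid m$ and $0$ otherwise. So in the formula for $\mu_+(a+p^n\Zp)$, the term indexed by $i\in R_{\lfloor n/2\rfloor}^+$ contributes $p^n$ precisely when $a\equiv i\bmod p^n$ and contributes nothing otherwise.

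The next step is to check that at most one index $i$ can contribute, i.e.\ that distinct elements of $R_{\lfloor n/2\rfloor}^+$ are incongruent modulo $p^n$. This is where the definition of $R_m^+$ as $\{\sum_{l=0}^{m-1}a_lp^{2l+1}\}$ matters: every such element is less than $p^{2m}\le p^n$ when $m=\lfloor n/2\rfloor$ (using $2\lfloor n/2\rfloor\le n$), so the reduction map $R_{\lfloor n/2\rfloor}^+\to\ZZ/p^n$ is injective, and two elements congruent mod $p^n$ must be equal. Combined with the reformulation of $S_n^+$ just before the corollary, this shows that the double sum equals $p^n$ when $a\in S_n^+$ and $0$ when $a\notin S_n^+$. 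Hence $\mu_+(a+p^n\Zp) = p^n\cdot p^{-\lfloor(3n+2)/2\rfloor} = p^{-(\lfloor(3n+2)/2\rfloor-n)}$, and a short parity computation gives $\lfloor(3n+2)/2\rfloor-n=\lfloor(n+2)/2\rfloor$, yielding the stated value. The argument for $\mu_-$ is identical, using that elements of $R_{\lfloor(n+1)/2\rfloor}^-$ are bounded by $p^{2\lceil n/2\rceil}$; one must be slightly careful here since $2\lfloor(n+1)/2\rfloor$ equals $n$ when $n$ is odd but $n+1$ (hence $>n$) when... no: for $n$ even, $\lfloor(n+1)/2\rfloor=n/2$ and $2(n/2)=n$, while for $n$ odd, $\lfloor(n+1)/2\rfloor=(n+1)/2$ and $2\cdot(n+1)/2=n+1>n$. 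So injectivity modulo $p^n$ genuinely needs checking in the odd case: an element $\sum_{l=0}^{(n-1)/2}a_lp^{2l}$ has its top term $a_{(n-1)/2}p^{n-1}$, and since $n-1<n$ the map to $\ZZ/p^n$ is still injective. Then $\mu_-(a+p^n\Zp)=p^n\cdot p^{-\lfloor(3n+1)/2\rfloor-1}$, and checking $\lfloor(3n+1)/2\rfloor+1-n=\lfloor(n+3)/2\rfloor$ finishes the proof.

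The only genuine obstacle is the bookkeeping with floor functions and the verification that the index sets $R_m^\pm$ inject into $\ZZ/p^n\ZZ$ for the specific value $m=\lfloor n/2\rfloor$ (resp.\ $\lfloor(n+1)/2\rfloor$); everything else is an immediate application of orthogonality of roots of unity. I would split into the cases $n$ even and $n$ odd to make the floor identities transparent, and write out the two parity computations $\lfloor(3n+2)/2\rfloor-n=\lfloor(n+2)/2\rfloor$ and $\lfloor(3n+1)/2\rfloor+1-n=\lfloor(n+3)/2\rfloor$ explicitly in each case.
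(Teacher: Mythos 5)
Your proof is correct and takes essentially the same route as the paper's: apply orthogonality of $p^n$-th roots of unity to the double sum in Corollary~\ref{final}, note that a term contributes $p^n$ exactly when $a\equiv i\bmod p^n$ for some $i$ in the relevant set $R^{\pm}$ (i.e.\ when $a\in S_n^{\pm}$), and simplify the resulting power of $p$. The injectivity of $R^{\pm}\to\ZZ/p^n\ZZ$ that you verify is left implicit in the paper, and your floor-function identities $\lfloor(3n+2)/2\rfloor-n=\lfloor(n+2)/2\rfloor$ and $\lfloor(3n+1)/2\rfloor+1-n=\lfloor(n+3)/2\rfloor$ both check out.
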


\begin{proof}
As before, we only prove this for $\mu_+(a+p^n\Zp)$. Let $i$ be any integer. We have
\[
\sum_{\zeta\in \mu_{p^n}}\zeta^{i-a}
=
\begin{cases}
p^n&{i\equiv a\mod p^n},\\
0&\text{otherwise}.
\end{cases}
\]
Therefore, if $a\in S_n^+$, the sum in Corollary~\ref{final} simplifies to
\[
\frac{1}{p^{\lfloor (3n+2)/2 \rfloor}}\times p^n.
\]
Otherwise, it is $0$. Hence the result.
\end{proof}
\begin{remark}
Since $\log_p^\pm$ are both $o(\log_p)$, the distributions $\mu_\pm$ are uniquely determined by the values given in Corollary~\ref{values}.
\end{remark}
\section{Generalization for two-variable logarithms}

In this section, we apply our result on the one-variable plus and minus logarithms to their two-variable counterparts  defined by Loeffler in \cite{Loeffler14}.

\begin{definition}
For $\ast, \circ \in \{+,-\}$, we define four two-variable logarithms by using $\log_p^{+}$ and $\log_p^{-}$:
\begin{equation*}
\log_p^{\ast\circ}(T_1,T_2) := \log_p^{\ast}(T_1) \cdot \log_p^{\circ}(T_2).
\end{equation*}
\end{definition}

For $\mathbf{a} = (a,b)$ and $\mathbf{n}=(n,m)$, we shall write $\mathbf{a}+p^{\mathbf{n}}\Zp$ for the open set $(a+p^n\Zp,b+p^m\Zp)$ in $\Zp^2$. Furthermore, we denote   the characteristic function of $\mathbf{a}+p^{\mathbf{n}}\Zp$  on $\Zp^2$ by $\chi_{\mathbf{a}+p^{\mathbf{n}}\Zp}$.
\begin{proposition}\label{characteristic2}
Let $\mathbf{a} = (a,b)$ and $\mathbf{n}=(n,m)$. The characteristic function $\chi_{\mathbf{a}+p^{\mathbf{n}}\Zp}$ is given by
\begin{equation*}
\chi_{\mathbf{a}+p^{\mathbf{n}}\Zp}(x,y) = \frac{1}{p^{n+m}}\left( \sum_{\zeta \in \mu_{p^n}} \zeta^{x-a} \right) \left(\sum_{\zeta \in \mu_{p^m}} \zeta^{y-b} \right).
\end{equation*}
\end{proposition}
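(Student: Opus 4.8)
The plan is to reduce the two-variable statement to the one-variable case, which is already standard (it appears in the proof of Proposition~\ref{inter} via the identity $\chi_{a+p^n\Zp}(x) = p^{-n}\sum_{\zeta\in\mu_{p^n}}\zeta^{x-a}$). First I would observe that the set $\mathbf{a}+p^{\mathbf{n}}\Zp = (a+p^n\Zp)\times(b+p^m\Zp)$ is a product of the two one-variable open sets, so its characteristic function factors as
\begin{equation*}
\chi_{\mathbf{a}+p^{\mathbf{n}}\Zp}(x,y) = \chi_{a+p^n\Zp}(x)\cdot\chi_{b+p^m\Zp}(y).
\end{equation*}
Indeed, a point $(x,y)\in\Zp^2$ lies in the product if and only if $x\in a+p^n\Zp$ and $y\in b+p^m\Zp$, which is exactly the condition that both factors on the right are equal to $1$.

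The second step is to substitute the one-variable Fourier-type decomposition into each factor. Applying the identity recalled in the proof of Proposition~\ref{inter} with parameters $(a,n)$ to the first factor and $(b,m)$ to the second gives
\begin{equation*}
\chi_{a+p^n\Zp}(x) = \frac{1}{p^n}\sum_{\zeta\in\mu_{p^n}}\zeta^{x-a}, \qquad \chi_{b+p^m\Zp}(y) = \frac{1}{p^m}\sum_{\zeta\in\mu_{p^m}}\zeta^{y-b}.
\end{equation*}
Multiplying these two expressions and collecting the scalar $p^{-n}\cdot p^{-m} = p^{-(n+m)}$ yields precisely the claimed formula. The only mild point of care is notational: the symbol $\zeta$ is used as a bound summation variable in both sums, ranging over the distinct groups $\mu_{p^n}$ and $\mu_{p^m}$; one should either rename one of them or note that since each sum is independent this overloading is harmless.

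There is essentially no obstacle here — the statement is a formal consequence of the multiplicativity of characteristic functions of product sets together with the one-variable identity, and the proof is a two-line computation. The real content of the section lies in what comes after (translating Corollary~\ref{values} through this decomposition to describe $\log_p^{\ast\circ}$ in terms of distributions on $\Zp^2$), not in Proposition~\ref{characteristic2} itself, which merely sets up the bookkeeping. Accordingly I would keep the proof short, simply writing $\chi_{\mathbf{a}+p^{\mathbf{n}}\Zp}(x,y) = \chi_{a+p^n\Zp}(x)\chi_{b+p^m\Zp}(y)$ and then invoking the one-variable decomposition used in Proposition~\ref{inter} for each factor.
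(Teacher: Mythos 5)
Your proof is correct and is exactly what the paper intends: the paper simply remarks that the proposition is ``a straightforward generalization of the analogous result in the 1-dimensional case,'' and your argument — factoring $\chi_{\mathbf{a}+p^{\mathbf{n}}\Zp}(x,y)=\chi_{a+p^n\Zp}(x)\chi_{b+p^m\Zp}(y)$ and applying the one-variable identity from the proof of Proposition~\ref{inter} to each factor — is precisely that generalization spelled out.
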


\begin{proof}
This is a straightforward generalization of the analogous result in the 1-dimensional case.
\end{proof}

\begin{definition}
For $\mu \in \mathcal{D}(\Zp^2,\Qp)$,  the \textit{two-dimensional Amice transform} is defined by the formula
\begin{equation*}
A_{\mu}(T_1,T_2) = \int_{\Zp^2}(1+T_1)^x(1+T_2)^y \mu(x,y).
\end{equation*}
\end{definition}

\begin{remark}
In the notation of  \cite{Loeffler14}, the two-variable plus/minus logarithms are in fact contained $\mathcal{D}^{(1/2,1/2)}(\Zp^2,\Qp)$ under the canonical quasi-factorisation of $\Zp^2$. Consequently, they are uniquely determined by their values evaluated on open sets of the form $\mathbf{a}+p^\mathbf{n}\Zp$ (see Definition~5 and Theorem 3 of \textit{op. cit.})
\end{remark}

It is immediate from the definition that $\log_p^{\ast\circ}(z-1,w-1) = \int_{\Zp^2}z^xw^y\mu_{\ast\circ}(x,y)$ if $z$ and $w$ are elements of $\mathbb{C}_p$ with $|z|_p,|w|_p<1$.

\begin{proposition}\label{prop:2var}
Let $\mathbf{a} = (a,b)$ and $\mathbf{n}=(n,m)$, then
\begin{equation*}
\mu_{\ast\circ}(\mathbf{a}+p^{\mathbf{n}}\Zp) = \mu_{\ast}(a+p^n\Zp)\cdot\mu_{\circ}(b+p^m\Zp).
\end{equation*}
\end{proposition}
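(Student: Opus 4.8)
The plan is to reduce the statement to the one-variable computations of Section~3, using that both the characteristic function of $\mathbf{a}+p^{\mathbf{n}}\Zp$ and the two-variable logarithm $\log_p^{\ast\circ}$ factor through one-variable data.

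First, I would write
\[
\mu_{\ast\circ}(\mathbf{a}+p^{\mathbf{n}}\Zp)=\int_{\Zp^2}\chi_{\mathbf{a}+p^{\mathbf{n}}\Zp}(x,y)\,\mu_{\ast\circ}(x,y)
\]
and substitute the formula for $\chi_{\mathbf{a}+p^{\mathbf{n}}\Zp}$ from Proposition~\ref{characteristic2}. The integrand becomes a finite $\QQ$-linear combination of the functions $(x,y)\mapsto\zeta^{x-a}\xi^{y-b}$ for $\zeta\in\mu_{p^n}$ and $\xi\in\mu_{p^m}$, so linearity of $\mu_{\ast\circ}$ allows the finite double sum to be pulled outside the integral:
\[
\mu_{\ast\circ}(\mathbf{a}+p^{\mathbf{n}}\Zp)=\frac{1}{p^{n+m}}\sum_{\zeta\in\mu_{p^n}}\sum_{\xi\in\mu_{p^m}}\zeta^{-a}\xi^{-b}\int_{\Zp^2}\zeta^x\xi^y\,\mu_{\ast\circ}(x,y).
\]

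Next, I would apply the identity stated just before the proposition, $\int_{\Zp^2}z^xw^y\,\mu_{\ast\circ}(x,y)=\log_p^{\ast\circ}(z-1,w-1)=\log_p^{\ast}(z-1)\cdot\log_p^{\circ}(w-1)$, with $z=\zeta$ and $w=\xi$. For $\zeta,\xi\neq1$ this is exactly the cited statement (both roots of unity lie in the open unit disc); for $\zeta=1$ or $\xi=1$ it follows directly from the definition of the two-dimensional Amice transform evaluated at $T_1=0$ or $T_2=0$. Hence each summand factors as $\bigl(\zeta^{-a}\log_p^{\ast}(\zeta-1)\bigr)\bigl(\xi^{-b}\log_p^{\circ}(\xi-1)\bigr)$, and the double sum separates:
\[
\mu_{\ast\circ}(\mathbf{a}+p^{\mathbf{n}}\Zp)=\left(\frac{1}{p^{n}}\sum_{\zeta\in\mu_{p^n}}\zeta^{-a}\log_p^{\ast}(\zeta-1)\right)\left(\frac{1}{p^{m}}\sum_{\xi\in\mu_{p^m}}\xi^{-b}\log_p^{\circ}(\xi-1)\right).
\]
By the chain of equalities in the proof of Proposition~\ref{inter}, the first factor equals $\mu_{\ast}(a+p^n\Zp)$ and the second equals $\mu_{\circ}(b+p^m\Zp)$, giving the claim.

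I do not expect a genuine obstacle here. The two points worth a sentence are the interchange of the finite double sum with the integral, which is immediate from linearity, and the fact that the multiplicativity $\log_p^{\ast\circ}=\log_p^{\ast}\cdot\log_p^{\circ}$ of power series transports to the integrals against $\zeta^x\xi^y$ — which is precisely the remark preceding the proposition. The remaining manipulations are exactly those already performed in the one-variable setting.
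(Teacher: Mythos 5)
Your argument is correct and follows essentially the same route as the paper's proof: expand via Proposition~\ref{characteristic2}, use linearity and the factorization $\log_p^{\ast\circ}=\log_p^{\ast}\cdot\log_p^{\circ}$ under the Amice transform, separate the double sum, and identify each factor using the computation in the proof of Proposition~\ref{inter}. Your extra remark handling the case $\zeta=1$ or $\xi=1$ is a nice touch of care but does not change the argument.
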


\begin{proof}
We follow the same argument as in the one-dimensional case. Proposition~\ref{characteristic2} gives
\begin{equation*}
\mu_{\ast\circ}(\mathbf{a}+p^{\mathbf{n}}\Zp) = \frac{1}{p^{n+m}}\int_{\Zp^2}\left( \sum_{\zeta \in \mu_{p^n}} \zeta^{x-a} \right) \left(\sum_{\zeta \in \mu_{p^m}} \zeta^{y-b} \right) \mu_{\ast\circ}(x,y).
\end{equation*}
Choose $\zeta_n$ and $\zeta_m$ to be some generators of $\mu_{p^n}$ and $\mu_{p^m}$ respectively. Then,
\begin{equation*}
\begin{split}
\mu_{\ast\circ}(\mathbf{a}+p^{\mathbf{n}}\Zp) &= \frac{1}{p^{n+m}}\int_{\Zp^2}\sum_{{0 \leq i < p^n}\atop{0 \leq j < p^m}}(\zeta_n^i)^{x-a}(\zeta_m^j)^{y-b}\mu_{\ast\circ}(x,y) \\
&=\frac{1}{p^{n+m}}\sum_{{0 \leq i < p^n}\atop{0 \leq j < p^m}}\zeta_n^{-ai} \zeta_m^{-bj}\int_{\Zp^2}(\zeta_n^{i})^x(\zeta_m^{j})^y\mu_{\ast\circ}(x,y) \\
&=\frac{1}{p^{n+m}}\sum_{{0 \leq i < p^n}\atop{0 \leq j < p^m}}\zeta_n^{-ai} \zeta_m^{-bj}\log_p^{\ast\circ}(\zeta_n^i-1,\zeta_m^j-1) \\
&=\frac{1}{p^{n+m}}\sum_{{0 \leq i < p^n}\atop{0 \leq j < p^m}}\zeta_n^{-ai}\log_p^{\ast}(\zeta_n^i-1)\zeta_m^{-bj}\log_p^{\circ}(\zeta_m^j-1) \\
&=\frac{1}{p^{m+n}} \left( \sum_{\zeta \in \mu_{p^n}} \zeta^{-a}\log_p^{\ast}(\zeta-1) \right) \left( \sum_{\zeta \in \mu_{p^m}} \zeta^{-b}\log_p^{\circ}(\zeta-1) \right).
\end{split}
\end{equation*}
Then, as in the middle of the proof of proposition~\ref{inter}, we may rewrite this as
\begin{equation*}
\begin{split}
\mu_{\ast\circ}(\mathbf{a}+p^{\mathbf{n}}\Zp) &= \frac{p^np^m}{p^{m+n}}\mu_{\ast}(a+p^n\Zp)\cdot\mu_{\circ}(b+p^m\Zp) \\
&=\mu_{\ast}(a+p^n\Zp)\cdot\mu_{\circ}(b+p^m\Zp),
\end{split}
\end{equation*}
as required.
\end{proof}

\begin{corollary}\label{cor:final2}
The values of $\mu_{\ast\circ}$ are given by
\begin{align*}
\mu_{++}(\mathbf{a}+p^{\mathbf{n}}\Zp) &= \begin{cases} p^{-\lfloor (n+2)/2 \rfloor -\lfloor (m+2)/2 \rfloor} & \text{if} \ a \in S_n^{+} \textit{and} \ b \in S_m^{+}, \\ 0 & \text{otherwise}, \end{cases} \\
\mu_{+-}(\mathbf{a}+p^{\mathbf{n}}\Zp) &= \begin{cases} p^{-\lfloor (n+2)/2 \rfloor -\lfloor (m+3)/2 \rfloor} & \text{if} \ a \in S_n^{+} \textit{and} \ b \in S_m^{-}, \\ 0 & \text{otherwise}, \end{cases} \\
\mu_{-+}(\mathbf{a}+p^{\mathbf{n}}\Zp) &= \begin{cases} p^{-\lfloor (n+3)/2 \rfloor -\lfloor (m+2)/2 \rfloor} & \text{if} \ a \in S_n^{-} \textit{and} \ b \in S_m^{+}, \\ 0 & \text{otherwise}, \end{cases} \\
\mu_{--}(\mathbf{a}+p^{\mathbf{n}}\Zp) &= \begin{cases} p^{\lfloor (n+3)/2 \rfloor -\lfloor (m+3)/2 \rfloor} & \quad \text{if} \ a \in S_n^{-} \textit{and} \ b \in S_m^{-}, \\ 0 &  \quad \text{otherwise}. \end{cases} \\
\end{align*}
\end{corollary}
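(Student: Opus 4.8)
The plan is to combine the factorisation established in Proposition~\ref{prop:2var} with the explicit one-variable evaluation from Corollary~\ref{values}, and then simply read off the four sign combinations. For each choice of $(\ast,\circ)\in\{+,-\}^2$, Proposition~\ref{prop:2var} gives
\[
\mu_{\ast\circ}(\mathbf{a}+p^{\mathbf{n}}\Zp)=\mu_{\ast}(a+p^n\Zp)\cdot\mu_{\circ}(b+p^m\Zp),
\]
so the left-hand side vanishes as soon as one of the two factors does, and it is nonzero exactly when both factors are nonzero. By Corollary~\ref{values}, the first factor is nonzero precisely when $a\in S_n^{\ast}$ and the second precisely when $b\in S_m^{\circ}$; this yields the case distinctions appearing in the statement, the support locus on sets of the form $\mathbf{a}+p^{\mathbf{n}}\Zp$ being exactly $S_n^{\ast}\times S_m^{\circ}$.

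In the nonvanishing case I would substitute the explicit powers of $p$ from Corollary~\ref{values}, namely $\mu_+(a+p^n\Zp)=p^{-\lfloor(n+2)/2\rfloor}$ and $\mu_-(a+p^n\Zp)=p^{-\lfloor(n+3)/2\rfloor}$, together with their analogues in the variable $(b,m)$, and multiply. For instance, when $\ast=\circ=+$ one gets
\[
\mu_{++}(\mathbf{a}+p^{\mathbf{n}}\Zp)=p^{-\lfloor(n+2)/2\rfloor}\cdot p^{-\lfloor(m+2)/2\rfloor}=p^{-\lfloor(n+2)/2\rfloor-\lfloor(m+2)/2\rfloor},
\]
and the remaining three cases follow in exactly the same way, replacing $\lfloor(n+2)/2\rfloor$ by $\lfloor(n+3)/2\rfloor$ whenever $\ast=-$ and likewise in the second variable. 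This exhausts all four possibilities and matches the table.

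There is no genuine obstacle here: the entire arithmetic content has already been isolated in Proposition~\ref{prop:2var} (the multiplicativity of $\mu_{\ast\circ}$ along the quasi-factorisation of $\Zp^2$) and in Corollary~\ref{values} (the one-variable evaluation). The only point requiring care is the bookkeeping of the four sign patterns and of the floor functions. Finally, one should note that, since each $\log_p^{\ast\circ}$ lies in $\mathcal{D}^{(1/2,1/2)}(\Zp^2,\Qp)$, the values on the open sets $\mathbf{a}+p^{\mathbf{n}}\Zp$ determine $\mu_{\ast\circ}$ uniquely, so the displayed formulae give a complete characterisation.
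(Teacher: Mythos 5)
Your proof is correct and follows exactly the paper's own argument: apply the factorisation of Proposition~\ref{prop:2var} and substitute the one-variable values from Corollary~\ref{values}. Note in passing that your computation yields $p^{-\lfloor (n+3)/2 \rfloor -\lfloor (m+3)/2 \rfloor}$ in the $\mu_{--}$ case, which is the correct value; the exponent printed in the statement is missing its leading minus sign.
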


\begin{proof}
This follows directly from combining Proposition~\ref{prop:2var} with Corollary~\ref{values}.
\end{proof}

\bibliographystyle{amsalpha}
\bibliography{references}
\end{document}